\documentclass[11pt]{amsart}

\usepackage[a4paper,margin=1in]{geometry}
\usepackage{amsmath,amssymb,amsthm,mathtools}
\usepackage{mathrsfs}
\usepackage{enumitem}
\usepackage{microtype}
\usepackage[colorlinks=true,linkcolor=blue,citecolor=blue,urlcolor=blue]{hyperref}
\usepackage{cleveref}
\usepackage{comment}
\usepackage{graphicx}
\usepackage{todonotes}

\newtheorem{theorem}{Theorem}[section]
\newtheorem{lemma}[theorem]{Lemma}
\newtheorem{proposition}[theorem]{Proposition}

\usepackage{lean-code}

\usepackage{biblatex}
\newcommand{\C}{\mathbb C}
\newcommand{\vc}{\mathrm{vc}}
\newcommand{\R}{\mathbb R}
\newcommand{\N}{\mathbb N}
\newcommand{\F}{\mathcal F}
\newcommand{\dd}{\,d}

\newcommand{\Polyn}[2]{\mathrm{Poly}_{#1}(#2)}

\DeclareMathOperator{\spanop}{span}

\title{Phase retrieval from a uniformly discrete point set}

\author[Jaume de Dios Pont]{Jaume de Dios Pont}
\address{Center for Data Science, New York University, New York, New York 10011, USA}
\email{jdedios@nyu.edu}

\author[Josef Greilhuber]{Josef Greilhuber}
\address{Department of Mathematics, MIT, Cambridge, MA 02139, USA}
\email{jgreil@mit.edu}

\author[Lukas Liehr]{Lukas Liehr}
\address{Department of Mathematics, Bar-Ilan University, Ramat-Gan 5290002, Israel}
\email{lukas.liehr@biu.ac.il}

\author[Mitchell A. Taylor]{Mitchell A. Taylor}
\address{Mathematical Institute, University of Oxford, Andrew Wiles Building, Radcliffe Observatory Quarter, Woodstock Road, Oxford, OX2 6GG, United Kingdom}
\email{mitchtaylor@shaw.ca}

\date{\today}
\subjclass[2020]{30H20, 42B10, 94A20}
\keywords{Phase retrieval, uniformly discrete sampling}

\begin{document}

\begin{abstract}
We prove that for every window $w$ of the form $w(x) = e^{-\pi |x|^2} h(x)$, where $h$ is a polynomial, there exists a uniformly discrete set of points $\mathcal{S} \subset \mathbb R^{2d}$ such that the magnitude of the short-time Fourier transform $V_w f$ on $\mathcal{S}$ determines every $f \in L^2(\mathbb R^d)$ up to a constant phase factor. The separation distance can be chosen independent of the degree of $h$ and proportional to the square root of the dimension. The proof combines a discrete norming inequality, based on a multidimensional Remez inequality and VC-dimension bounds, with tail estimates for the reproducing kernel. A formalization of our main result in Lean 4 is also provided.
\end{abstract}


\maketitle

\section{Introduction}

\label{sec:introduction}

\subsection{}

Let $X$ be a space of functions defined on a domain $\Omega$. 
The phase retrieval problem in $X$ asks to recover $F \in X$, up to a constant phase factor, from the absolute values $(|F(x)|)_{x \in \Omega}$. 
The problem originally arises as a signal recovery question from experimental measurements, including in crystallography and in coherent diffraction imaging techniques such as ptychography. 
These are problems where one wishes to recover properties of a physical system by measuring the light wave diffraction patterns it generates. 
In this case, one can typically measure the amplitude of the diffracted light wave (its square, $|F(x)|^2$, is proportional to the energy deposited on a detector) but one cannot measure the phase of the wave. 
Due to the nature of the diffraction process, the geometric properties of the object that one wishes to measure are deeply intertwined in both the phase and the amplitude of the diffracted wave, and one must therefore recover the phase as well.  

A natural question is whether one can retain uniqueness after restricting the measurements to a uniformly discrete subset of $\Omega$. In other words, one wishes to recover $F$ (up to a global phase), from the values $(|F(x)|)_{x\in \mathcal S}$,  for some set $\mathcal S\subset \Omega$ such that any two pairwise distinct elements $s,s' \in \mathcal{S}$ satisfy $|s-s'| \geq \delta$. Such discretization questions are motivated by experimental limitations in signal processing and imaging, where measurements often provide only sampled magnitude data \cite{GrohsKoppensteinerRathmair2020} at a resolution limited by the measuring device. Questions of this nature have been studied in several settings, including band limited functions \cite{alaifari2021uniqueness,alaifari2017reconstructing,Thakur2011}, shift-invariant spaces \cite{ChenChengSunWang2020,grochenig2020phase,romero2021sign}, and the short-time Fourier transform (STFT) \cite{alaifari2025multiwindow,chen2025sampling,GrohsLiehr2022,GrohsLiehr2025}.

This work considers the case where $X$ is the range of $L^2(\mathbb R^d)$ by the short-time Fourier transform (STFT) with a window function $e^{-\pi |x|^2} h(x)$ where $h$ is a polynomial. From a physical point of view, this is intimately related to the recovery question in ptychography. Following a classical argument, the Bargmann transform identifies the range of the STFT with a certain space of polyanalytic functions which depends on the chosen window function. The bulk of the work of the present paper follows after this reduction: We prove that every such polyanalytic Fock space admits a uniformly discrete phase-retrieval set, yielding a uniformly discrete phase-retrieval set for the STFT in every dimension and for a large and natural class of window functions.

For a true polyanalytic Fock space on $\C^d$, uniqueness is known to hold when $\mathcal S = \C^d$ \cite{AlaifariWellershoff2021}. The remaining challenge, showing that a uniformly discrete set of observables exists, has raised significant attention over the past years. A main obstruction, already in dimension one, is that no lattice allows for phase retrieval of every function in $L^2(\R)$. This was originally shown for the Gaussian window \cite{AlaifariWellershoff2022} and later for general functions \cite{GrohsLiehr2022}. On the other hand, sufficiently dense square-root lattices give uniqueness for a class of windows that includes the windows under consideration here \cite{GrohsLiehr2025}. Such sets are discrete, but the distances between distinct sampling points becomes arbitrarily small, making measurements impractical. Moreover, it was shown in \cite{grohs2025phase} that a suitable union of three uniformly discrete sets forms a phase retrieval set, although the resulting union is not itself uniformly discrete. Uniformly discrete phase retrieval sets are also known to exist for certain dense subsets of $L^2(\R^d)$. However, whether such a set exists for the entire space $L^2(\R^d)$ has remained an open problem.

Another reason for the difficulty is that the polyanalytic Fock spaces that arise from the Bargmann transform (which are Hilbert spaces where all functions are integrable against $e^{-\pi|z|^2}$) are \emph{critical} for this problem: If one improves the weight to $e^{-\pi|z|^{2-\epsilon}}$ there are discrete STFT phaseless uniqueness sets of arbitrarily high separation. If one instead relaxes the integrability condition to $e^{-\pi|z|^{2+\epsilon}}$, there are no uniformly discrete sets of Fourier uniqueness.

\subsection{}

The short-time Fourier transform (STFT) of $f \in L^2(\R^d)$ with respect to the window function $h \in L^2(\R^d)$ is defined by
\begin{equation}\label{eq:intro_stft}
    V_h f(x,\omega) = \int_{\R^d} f(t)\overline{h(t-x)}e^{-2\pi i\omega\cdot t}\, dt,
    \quad x,\omega \in \R^d.
\end{equation}
The STFT phase retrieval problem consists in determining $f$ from the collection of magnitude measurements $\{ |V_h f(x,\omega)| : (x,\omega) \in \mathcal{S} \}$, where $\mathcal{S} \subseteq \R^{2d}$. Since $|V_h (\sigma f)|=|V_hf|$ for any $\sigma \in \mathbb C$ with $|\sigma|=1$, recovery is only possible up to a global phase factor. We restrict ourselves to windows of the form $$w(x)= e^{-\pi |x|^2} h(x)$$ where $h(x) \in \mathbb C[x_1,\ldots,x_d]$ is an arbitrary polynomial. 
It is well-known that for these windows, the magnitude of the STFT on all of $\R^{2d}$ determines every $f\in L^2(\R^d)$ up to a global phase. Indeed, the ambiguity-function identity gives this conclusion for every window $h$ with the property that $V_h h$ is nonzero almost everywhere \cite{AlaifariWellershoff2022}. For the windows under consideration, this condition follows from the fact that its ambiguity function is a Gaussian multiplied by a nonzero polynomial.

\subsection{} We say that a subset $\mathcal S$ of a domain $\Omega$ is a \emph{phase-retrieval set} for a space of functions $X$ on $\Omega$ if, for every $F,G\in X$, we have the implication
$$
    |F(s)|=|G(s)|\quad \text{for all }s\in\mathcal S
    \quad\Longrightarrow\quad
    F=\sigma G\quad \text{for some } \sigma\in\C,\ |\sigma|=1.
$$
A subset $\mathcal S$ of $\mathbb R^{m}$ is called \emph{$\delta$-separated} if $|s-t|\geq\delta$ for all distinct $s,t\in\mathcal S$, and \emph{uniformly discrete} if this holds for some $\delta>0$. Our main result shows that there exist uniformly discrete phase-retrieval sets for the subspaces of $L^2(\mathbb R^d \times \mathbb R^d)$ given by the range of the STFT with respect to the windows introduced above.

\begin{theorem}[STFT-version]\label{thm:STFT}
For every \(d\ge 1\) and every window $w(x) = e^{-\pi |x|^2} h(x)$ with $h \in \mathbb C[x_1,\ldots,x_d]$, $h \neq 0$, there exists a $\delta$-separated set \(\mathcal S \subset \R^d \times \R^d\) such that whenever $f,g \in L^2(\mathbb R^d)$ satisfy
\begin{align*}
    |V_w f (x,\omega)| = |V_w g (x,\omega)| \text{ for all } (x,\omega) \in \mathcal S,
\end{align*}
then there exists $\theta \in \mathbb C$ with $|\theta| = 1$ such that $f = \theta g$ almost everywhere. The separation distance $\delta$ can be taken to be at least $\delta = 10^{-7} \sqrt{d}$.
\end{theorem}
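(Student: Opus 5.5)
We will reduce the statement to a phase-retrieval problem for a polyanalytic Fock space and then prove that problem by combining the known continuous uniqueness with a quantitative discretization.

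\textbf{Reduction.} First we apply the Bargmann transform. For $w(x)=e^{-\pi|x|^2}h(x)$ with $h$ a polynomial of degree $N$, the map $f\mapsto e^{\pi|z|^2/2}e^{-\pi i x\cdot\omega}V_wf(x,-\omega)$, with $z=x+i\omega$, sends $L^2(\R^d)$ injectively onto a closed subspace $\mathcal F_h$ of a polyanalytic Fock space of total order at most $N+1$. Equivalently, $\mathcal F_h$ consists of functions $F(z)=\sum_{|\alpha|\le N}\bar z^\alpha F_\alpha(z)$ with $F_\alpha$ entire, normed in $L^2(e^{-\pi|z|^2}dz)$. Since $|V_wf|$ differs from $|F|$ only by the fixed positive factor $e^{-\pi|z|^2/2}$, it suffices to find a $\delta$-separated $\mathcal S\subset\C^d$ on which $|F|=|G|$ forces $F=\sigma G$. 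The continuous uniqueness result \cite{AlaifariWellershoff2021}, equivalently the ambiguity-function argument in the introduction, gives this conclusion when $\mathcal S=\C^d$.

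\textbf{Discretization.} The key point is that $|F|^2-|G|^2$ is controlled by its samples. For $F,G\in\mathcal F_h$, the function $Q=|F|^2-|G|^2$ is real-analytic and has the form $\sum \bar z^\alpha z^\beta(\cdots)$. On a ball of radius $r$ around a point $a$, we approximate $F$ and $G$ by their Taylor polynomials of controlled degree $n(a)$. This is possible by shifting with the Weyl operators, which preserve $\mathcal F_h$ up to unimodular factors, and by using the decay of the reproducing kernel $K(z,w)\sim P(z,w)e^{\pi z\bar w}$, so that truncation errors are small relative to $\|F\|$. The approximating $Q$ is then a real polynomial in $2d$ variables of bounded degree.

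Next we apply a multidimensional Remez inequality: a polynomial of degree $n$ on a ball is bounded by $C^n$ times its maximum on any subset of proportional measure. Combined with a VC-dimension bound for the sign/sublevel sets of polynomials of degree $n$ in $\R^{2d}$, which is of order $\binom{n+2d}{2d}\log$, an $\epsilon$-net argument yields a finite $\delta$-separated set in each ball that norms all such polynomials simultaneously. Here $\delta$ depends on the ball radius, which is of order $\sqrt d$ since Gaussian mass concentrates at scale $\sqrt d$, and not on $n$. Gluing these sets over a cover of $\C^d$ by balls, whose degrees grow in a way that is handled by a separate set of points per ball, we obtain $\mathcal S$.

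\textbf{Conclusion.} If $|F|=|G|$ on $\mathcal S$, then $Q$ vanishes on the nets, so the norming inequality shows that $Q$ is small on every ball up to tail errors. Letting the truncation parameters tend to their limits gives $Q\equiv0$, and the continuous uniqueness result then yields $F=\sigma G$.

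The main obstacle is that $\mathcal F_h$ is critical. Both the tail estimate and the local polynomial approximation must be uniform over the infinite-dimensional space, with errors at the Gaussian scale, while the separation stays fixed. The VC/Remez count must therefore beat the growth $C^n$. We would first try the approach of working locally in Fock-space coordinates on unit-scale balls with degree $n$ fixed but large.
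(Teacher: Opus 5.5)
The Bargmann-transform reduction is fine, but the discretization does not close. \textbf{First, the local scheme can never force $Q=0$.} A $\delta$-separated set has only finitely many points in any bounded region, and finitely many samples cannot force $Q$ to vanish there. So your Weyl shift, degree-$n$ Taylor truncation, Remez and net on a ball $B(a,r)$ give only an approximate bound: $e^{-\pi|z|^2}|Q(z)|\le \eta_n(\|F\|^2+\|G\|^2)$ on $B(a,r)$, with some fixed $\eta_n>0$. Letting $n(a)\to\infty$ on balls further out gives only $e^{-\pi|z|^2}Q(z)\to 0$ as $|z|\to\infty$. Every pair $F,G$ satisfies this anyway, since $V_wf$ vanishes at infinity. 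So ``letting the truncation parameters tend to their limits'' is not available.

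The missing idea is to propagate information from far away back to a fixed compact set. The paper expands at the origin and truncates to $F_N,G_N$ in the span of $\Psi_{\mathbf n,h}$, $|\mathbf n|\le N$; these are polynomials of degree $\le N+L$. It samples only on the annulus $\tfrac{\lambda}{8}\sqrt N\le|z|\le\lambda\sqrt N$. Remez on the convex hull then controls $|F_N|^2-|G_N|^2$ on the whole ball $B_{\lambda\sqrt N}(0)$, in unweighted sup norm, with loss $e^{4(N+L)}$. The tail bound makes the error on the samples $\lesssim N^L e^{\lambda^2N/2}(e\lambda^2)^{N/2}$. For $\lambda=1/100$ this beats the Remez loss, leaving $\lesssim N^Le^{-N/10}$ on $B_{\lambda\sqrt N}(0)$. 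A sparse sequence $N_{j+1}\ge 100N_j$ gives disjoint annuli, and hence $|F|=|G|$ on every compact set.

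\textbf{Second, your dimension count is too crude to keep $\delta$ fixed.} A set norming all real polynomials of degree $n$ on $\R^{2d}$ needs at least $\binom{n+2d}{2d}\sim n^{2d}$ points. In a ball of radius $r$ its separation is therefore $\lesssim r/n$. This contradicts your claim that $\delta$ is independent of $n$. In the global picture ($n\sim N$, $r\sim\lambda\sqrt N$) it would give $\delta\lesssim \lambda N^{-1/2}\to 0$.

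The paper instead exploits the structure of $|F|^2-|G|^2$ with $F,G$ in a complex space of dimension $N'=\binom{N+d}{d}\approx N^d/d!$. Using Warren's bound on sign patterns of degree-$4$ polynomials in $4N'$ real variables, the superlevel sets of $\bigl||F|^2-|G|^2\bigr|$ have VC-dimension at most $36N'$. About $N^d/d!$ points then sit in a region of volume $\approx|\mathbb B^{2d}|(\lambda\sqrt N)^{2d}$, so the separation is $\gtrsim \lambda\sqrt d$, independent of $N$. This is exactly the critical balance you identify but do not resolve.

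A smaller point: a VC $\varepsilon$-net is not automatically separated. The paper oversamples so that each relevant superlevel set receives at least $tM/2$ random points. It then deletes at most $tM/3$ points covering the $\delta$-close pairs.
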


\subsection{}
The previous theorem will follow directly from a restated version of it which we provide in Section \ref{sec:rest}. To state our result, we briefly introduce the Fock spaces under consideration.

The (analytic) Fock space $\F^d_{\mathbf 0}$ is defined by
$$
    \F^d_{\mathbf 0}
    = \left\{F:\C^d\to\C \text{ entire}:
    \int_{\C^d}|F(z)|^2e^{-|z|^2}\dd z<\infty\right\}.
$$
Given $L \in \mathbb N$, the polyanalytic Fock space of order $L$ is
\begin{equation}\label{eq:intro_true_fock}
    \F^d_{L}
    = \left\{ F \in L^2(\mathbb C^d; e^{-|\cdot|^2} ): \partial_{\bar z}^{L+1} F = 0 \right\}.
\end{equation}
Here, $\partial_{\bar z} F = (\partial_{\bar z_1} F, \ldots, \partial_{\bar z_d} F)$ denotes the collection of Wirtinger derivatives in $\bar z_1,\ldots,\bar z_d$. This agrees with the definition as a closed span of complex Hermite polynomials given in Section~\ref{sec:tails}. In particular, $L = 0$ gives the analytic case.

Let us restrict ourselves momentarily to the tensor-product Hermite windows $$h_{\mathbf q}(t)=\prod_{j=1}^d h_{q_j}(t_j),$$ where $h_n$ denotes the normalized Hermite function of degree $n$, with $h_0(t)=2^{1/4}e^{-\pi t^2}$.
The connection with the STFT is then given by the \emph{true polyanalytic Bargmann transform}, which is a unitary map $\mathcal B_{\mathbf q}:L^2(\R^d)\to\F^d_{\mathbf q}$ \cite[Section 3]{Abreu2010}. With $z=\sqrt\pi(x-i\omega)$, it can be written in our normalization as
\begin{equation*}
    (\mathcal B_{\mathbf q}f)(z)
    = e^{|z|^2/2+i\pi x\cdot\omega}V_{h_{\mathbf q}}f(x,\omega).
\end{equation*}
Consequently, we have that
$
    |V_{h_{\mathbf q}}f(x,\omega)|
    = e^{-|z|^2/2}|(\mathcal B_{\mathbf q}f)(z)|.
$
Since the Gaussian factor is nowhere zero, equality of STFT magnitudes on a set in $\R^{2d}$ is equivalent to equality of the corresponding Fock-space magnitudes on its image in $\C^d$. The change of variables also preserves uniform separation, up to the factor $\sqrt\pi$.

To deal with a window of type $w(x) = e^{-\pi |x|^2} h(x)$, $h(x) = \sum_{|\mathbf q| \leq L} a_{\mathbf q} x^{\mathbf q}$, we consider the generalized Bargmann transform
\begin{align}
    \label{eq:intro_bargmann}
    (\mathcal B_{h}f)(z)
        = e^{|z|^2/2+i\pi x\cdot\omega}V_{w}f(x,\omega),
\end{align}
and the space $\mathcal F_h = \{\mathcal B_{h}f: f \in L^2(\mathbb R^d)\}$. Since $h$ can be expanded into Hermite polynomials of degree at most $L$, it follows by linearity that $\mathcal F_h \subseteq \bigoplus_{|\mathbf q| \leq L} \mathcal F_{\mathbf q}^d$, so $ \mathcal F_h $ is a space of polyanalytic functions of order $L$. Further properties of this space, including a convenient orthonormal basis, are discussed in \Cref{sec:tails}.

\subsection{}\label{sec:rest} \Cref{thm:main}, restated in the language of the polyanalytic Fock spaces $\mathcal F_h$, reads as follows.

\setcounter{theorem}{0}
\begin{theorem}[Polyanalytic-version]\label{thm:main}
For every \(d\ge 1\) and every $h(x) = \sum_{|\mathbf q| \leq L} a_{\mathbf q} x^{\mathbf q}$, there exists a uniformly $\delta$-separated set \(\mathcal S \subset\C^d\) such that \(\mathcal S\) is a phase-retrieval set for the space \(\F_{h}\). The separation distance $\delta$ can be taken to be at least $\delta = \frac{4}{10^7} \sqrt{d}$.
\end{theorem}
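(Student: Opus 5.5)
The plan is to reduce phase retrieval to a \emph{uniqueness} (sampling) problem for a larger space, and to build the sampling set as a union of translated, scaled copies of finite point sets on which a norming inequality holds. The standard reduction: if $|F|=|G|$ on $\mathcal S$ for $F,G\in\F_h$, then the real-analytic, polyanalytic function $H(z,w)=F(z)\overline{F(w)}-G(z)\overline{G(w)}$ vanishes on the ``diagonal'' data $|F(s)|^2-|G(s)|^2=0$. Knowing $|F|^2-|G|^2$ on all of $\C^d$ already gives $F=\sigma G$ by the ambiguity-function argument recalled in the introduction (for these windows $V_w w$ is a Gaussian times a nonzero polynomial). So it suffices to show that $\mathcal S$ is a uniqueness set for the space $\mathcal Q=\{F\bar F'-G\bar G': F,F',G,G'\in\F_h\}$, i.e.\ for functions of the form $e^{-|z|^2}$-weighted real polynomials-times-entire pieces of degree at most $2L$ in $z,\bar z$, with the quadratic-form growth $|Q(z)|e^{-|z|^2}\in L^1$.

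The core step is a local discrete norming inequality. Fix a ball $B=B(z_0,r)$ with $r$ comparable to $\sqrt d$. Using the reproducing kernel of $\F_h$ (explicit via the Hermite orthonormal basis of \Cref{sec:tails}), I split $F=F_{\mathrm{loc}}+F_{\mathrm{tail}}$ near $z_0$, where, after translating by the Weyl operator to move $z_0$ to the origin, $F_{\mathrm{loc}}$ is a polyanalytic polynomial of bounded degree $N$ and $F_{\mathrm{tail}}$ is controlled by Gaussian tail estimates of the kernel: $\sup_B |F_{\mathrm{tail}}|e^{-|z|^2/2}\le \varepsilon(N)\|F\|$ with $\varepsilon(N)$ decaying superexponentially. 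Then $|F|^2-|G|^2$ restricted to $B$ is, up to an error $\varepsilon$, a real polynomial $P$ in $2d$ real variables of degree $\le 2N$. For such polynomials I apply a multidimensional Remez inequality: $\sup_B|P|\le (C/|E|)^{2N}\sup_E|P|$ for measurable $E\subset B$. Combined with a VC-dimension bound for sign patterns of polynomials of bounded degree (class of sets $\{|P|>t\}$ has VC dimension $\lesssim \binom{2d+2N}{2N}$), an $\epsilon$-net argument yields a \emph{finite} set $E_B\subset B$, which can be chosen $\delta$-separated with $\delta\sim 10^{-7}\sqrt d$ because it only needs to hit every superlevel set of measure $\ge\eta|B|$, and such nets can be drawn from a grid of mesh $\delta$. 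This gives $\sup_B|P|\le K\sup_{E_B}|P|$.

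Globally, I take $\mathcal S=\bigcup_k E_{B_k}$ over a covering of $\C^d$ by disjoint-ish balls whose centers are spaced so the union stays $\delta$-separated (drop points near boundaries). If $|F|=|G|$ on $\mathcal S$, then on each $B_k$, $\sup_{B_k}||F|^2-|G|^2|e^{-|z|^2}\le K\varepsilon(N)(\|F\|^2+\|G\|^2)$ localized to energy near $B_k$; summing, and choosing $N$ large so $K\varepsilon(N)$ beats constants, one must be careful since $K$ grows with $N$. The main obstacle is exactly this balance: Remez constants grow like $C^{N}$ while kernel tails decay like $e^{-cN\log N}$ only when the ball radius is below the critical Gaussian scale, and criticality of the $e^{-|z|^2}$ weight means there is no slack. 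I would first try a localized-energy argument: replace $\|F\|$ by the weighted mass of $F$ on a neighbourhood $\tilde B_k$, conclude $|F|^2-|G|^2$ is small relative to local energy everywhere, iterate/pass to the limit $N\to\infty$ along a fixed $\mathcal S$ (nets for all $N$ at once are impossible, so instead use a single large $N$ and an exact vanishing argument: the tail of $F\bar F-G\bar G$ is itself in $\mathcal Q$, giving a contraction $\|Q\|\le \tfrac12\|Q\|$ in a suitable localized norm, hence $Q\equiv 0$ on the diagonal), and then conclude via the ambiguity-function identity that $F=\sigma G$.
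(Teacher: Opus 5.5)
There is a genuine gap at the step that should turn approximate smallness into \emph{exact} vanishing of $|F|^2-|G|^2$. With balls of fixed radius $r\sim\sqrt d$, a $\delta$-separated set has at most $C(r/\delta)^{2d}$ points in each ball. On the other hand, any set $E_B$ that norms $|F_N|^2-|G_N|^2$ must have at least $\binom{N+d}{d}$ points: otherwise take $G=0$ and a nonzero $F_N$ in the degree-$N$ space vanishing on $E_B$. So $N$ stays bounded, $\varepsilon(N)$ is a fixed positive number, and all you obtain is $\bigl||F|^2-|G|^2\bigr|e^{-|z|^2}\lesssim K\varepsilon(N)(\|F\|^2+\|G\|^2)$.

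The proposed contraction $\|Q\|\le\frac12\|Q\|$ cannot close this. The truncation errors are of size $\|F\|\,\|F-F_N\|$, or, written linearly, $\|F\otimes\bar F-G\otimes\bar G\|$ times a kernel tail, and no norm of the diagonal function $Q=|F|^2-|G|^2$ controls them. For instance, $F=a+b$, $G=a-b$ with $a,b$ normalized reproducing kernels at two far-apart points gives $Q=4\Re(a\bar b)$, which is uniformly tiny against $e^{|z|^2}$, while $F\otimes\bar F-G\otimes\bar G$ has norm of order one. This is just the instability of phase retrieval. Localizing energy to $\tilde B_k$ does not help either, since the tail of $F$ near $z_0$ sees the mass of $F$ at distance $\sim\sqrt N$ from $z_0$.

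The missing idea is that you need neither to cover $\C^d$ nor to fix $N$: centre everything at the origin and tie the radius to the truncation order. The paper applies \Cref{prop:norming} on the annulus $\mathcal A_{N,\lambda}=B_{\lambda\sqrt N}(0)\setminus B_{\lambda\sqrt N/8}(0)$ to $F_N=P_NF$ and $G_N=P_NG\in\mathcal V_h^N$. This works for three reasons.
\begin{itemize}
\item The number of samples needed, $\sim\dim\mathcal V_h^N=\binom{N+d}{d}$, grows like $N^d$, exactly as the volume of $B_{\lambda\sqrt N}(0)$ does, so the separation is $\gtrsim\lambda\sqrt d$ uniformly in $N$.
\item The Remez loss $e^{4(N+L)}$ holds on the whole ball (the convex hull) and is scale invariant, hence independent of $\lambda$.
\item \Cref{lem:tail_bound} gives a truncation error $\lesssim N^Le^{\lambda^2N/2}(e\lambda^2)^{N/2}$, whose exponential rate $\frac12+\log\lambda+\frac{\lambda^2}{2}$ is below $-4.1$ for $\lambda=\frac1{100}$, so it beats the Remez rate $4$.
\end{itemize}
Taking $\mathcal S=\bigcup_j\mathcal S_{N_j,\lambda}$ with $N_{j+1}\ge 100N_j$ keeps the annuli disjoint and the union separated. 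It yields $\||F_{N_j}|^2-|G_{N_j}|^2\|_{L^\infty(B_{\lambda\sqrt{N_j}}(0))}\lesssim N_j^Le^{-N_j/10}\to0$, hence $|F|=|G|$ on $\C^d$ directly, with no contraction needed.

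This balance also depends on the sharp count in \Cref{lem:VCdimension}. Your VC bound $\binom{2d+2N}{2N}\sim N^{2d}$, for superlevel sets of arbitrary real polynomials of degree $2N$, would require $\sim N^{2d}$ samples in a region of volume $\sim N^d$, driving the separation to zero. The paper instead parametrizes $|F|^2-|G|^2$ by the $4\dim\mathscr V$ real coefficients of $F,G$ and uses Warren's sign-pattern bound to get VC dimension at most $36\dim\mathscr V$.

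Finally, separation is not obtained by drawing the net from a $\delta$-grid, which gives no such guarantee. The paper samples $M\sim\dim\mathscr V/t$ random points and uses the VC relative-deviation inequality so that every superlevel set contains at least $tM/2$ of them. It then deletes at most $tM/3$ points to remove all close pairs.
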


Through \eqref{eq:intro_bargmann}, \Cref{thm:main} gives, for every $w(x) = e^{-\pi |x|^2} h(x)$, a uniformly discrete set $\mathcal S\subseteq\R^{2d}$ such that the values $|V_{w}f|$ on $\mathcal S$ determine every $f\in L^2(\R^d)$ up to a global phase.
Remarkably, the separation distance in this theorem does not depend on the degree of $h$, although the proof presented here makes substantial use of the fact that $h$ is a polynomial. With some additional effort, it can even be shown that there exists a $\delta$-separated set $\mathcal S \subseteq \mathbb C^d$ which is a phase-retrieval set for all spaces $\mathcal F_h$ \emph{simultaneously}. This raises the intriguing question whether this result in fact holds for a much larger class of windows.

We also remark that the dimension scaling seems to suggest that phase-retrieval sets become less dense as the dimension increases. This is not the case: A lattice of co-volume (density) 1 in $d$-dimensions can have separation on the order of $\sqrt{d}$ between its elements.

\subsection{}
The proof of \Cref{thm:main} combines a finite-dimensional sampling argument with an approximation estimate. In Section~\ref{sec:norming}, we use a multidimensional Remez inequality and a VC-dimension argument to construct separated sets that control differences $|F|^2-|G|^2$ for polynomials that are drawn from a given, low-dimensional subspace of the space of polynomials. Section~\ref{sec:tails} bounds the error incurred by truncating the complex Hermite expansion of a Fock-space function. In Section~\ref{sec:main_proof}, we apply these estimates on annuli whose radii increase sufficiently rapidly. On each annulus, we consider the truncated Taylor series $F_N$ and $G_N$ up to an order $N$ approximately the square radius of the annulus, use the norming inequality of Section~\ref{sec:norming} to construct a uniformly separated set $\mathcal S_N$ capable of retrieving the norm of $|F_N|^2-|G_N|^2$ up to an exponential loss, and show that the approximation error tends to zero faster than the exponential loss incurred from the norming inequality. It follows that $|F|=|G|$ everywhere, and continuous phase retrieval completes the proof.

\section{Discrete norming inequality}\label{sec:norming}

In this section, we prove the discrete norming inequality which provides the crucial propagation of smallness of $|F|^2-|G|^2$ from a discrete subset to the bulk of $\mathbb C^d$. For its formulation, the complex-analytic setting of the remainder of this paper is irrelevant. The polynomial mapping $(F,G) \mapsto |F|^2-|G|^2$ could be replaced by any other polynomial of low degree at little cost, but for the sake of obtaining explicit constants we chose to remain at the present level of generality.

\begin{proposition}[Discrete norming inequality]
\label{prop:norming}
Let $\Omega \subseteq \mathbb R^m$ be a bounded domain. Denote its convex hull by $\hat\Omega$. There exist constants \(\mu,\delta > 0\) such that for any vector subspace $\mathscr V \subseteq \Polyn{n}{\mathbb R^m}$ of complex dimension $N$ ($N,n \in \mathbb N$) there exists a $\delta N^{-\frac1{m}}$-separated set of points $\mathcal S_{\mathscr V} \subseteq \Omega$ with
\[
    \left\lVert |F|^2-|G|^2 \right\rVert_{L^\infty(\hat \Omega)}
    \le
    e^{\, \mu n}
    \left\lVert |F|^2-|G|^2 \right\rVert_{L^\infty(\mathcal S_{\mathscr V})}
\]
for all $F, G \in \mathscr V$. If $|\hat \Omega \setminus \Omega| \leq 2^{-2m-2}|\hat \Omega|$, we may choose $\delta = \frac1{16} \left( \frac{|\Omega|/|\mathbb B^{m}|}{3\cdot 10^5 (1 + d)} \right)^{\frac1{m}}$ and $\mu = 4$.
\end{proposition}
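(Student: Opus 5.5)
The plan is to reduce to a statement about a single real vector space of polynomials and then use a Remez-type inequality together with an $\epsilon$-net/VC argument. Set $\mathscr W = \spanop_{\mathbb R}\{ \mathrm{Re}(F\bar G'),\ \mathrm{Im}(F\bar G') : F,G'\in\mathscr V\}$. For $F,G\in\mathscr V$, the function $P=|F|^2-|G|^2$ lies in $\mathscr W$, which consists of real polynomials of degree at most $2n$ and has real dimension $D\le 2N^2$. So it suffices to find a $\delta N^{-1/m}$-separated $\mathcal S\subseteq\Omega$ with $\|P\|_{L^\infty(\hat\Omega)}\le e^{\mu n}\|P\|_{L^\infty(\mathcal S)}$ for all $P\in\mathscr W$. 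I would actually only use the smaller cone of differences $|F|^2-|G|^2$, if needed, to keep the VC-dimension linear in $N$ rather than quadratic. The separation $\delta N^{-1/m}$ suggests the set should have about $N$ points. The family of sets $\{x : |P(x)| \ge \tfrac12\|P\|_\infty\}$ for $P=|F|^2-|G|^2$ is parametrized by $2N$ complex coefficients. It is cut out by sign conditions of quadratic forms in these coefficients, so by a Warren/Milnor-type count its VC-dimension is $O(N)$ (e.g. $\le 3\cdot 10^5(1+d)N$ after tracking constants; here $d$ in the stated constant plays the role of the ambient dimension).

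The key analytic input is the multidimensional Remez inequality on the convex body $\hat\Omega$: for a polynomial $P$ of degree $\le 2n$ and a measurable $E\subseteq\hat\Omega$,
\[
\|P\|_{L^\infty(\hat\Omega)}\le \Big(\tfrac{4m|\hat\Omega|}{|E|}\Big)^{2n}\|P\|_{L^\infty(E)}.
\]
Apply it with $E=\{x\in\hat\Omega : |P(x)|\le e^{-\mu n}\|P\|_{L^\infty(\hat\Omega)}\}$. This shows that every such "small set" has measure at most $e^{-c\mu n}|\hat\Omega|$. Hence the "large set" $A_P=\{x: |P(x)|> e^{-\mu n}\|P\|_{\hat\Omega}\}$ occupies all but an exponentially small fraction of $\hat\Omega$. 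Using $|\hat\Omega\setminus\Omega|\le 2^{-2m-2}|\hat\Omega|$, it still occupies a positive constant fraction of $\Omega$. It then suffices that $\mathcal S$ meets every $A_P$, i.e.\ that $\mathcal S$ is an $\epsilon$-net for the range space $\{A_P\cap\Omega\}$ with respect to normalized Lebesgue measure on $\Omega$, with $\epsilon$ a fixed constant. By the $\epsilon$-net theorem (Haussler–Welzl), a random sample of $K=O(\mathrm{VC}\cdot\epsilon^{-1}\log\epsilon^{-1})=O(N)$ uniform points in $\Omega$ is such a net with positive probability.

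The main obstacle is combining the net with separation. I would first partition $\Omega$ (up to the small boundary defect) into about $K$ cells of diameter $\sim (|\Omega|/K)^{1/m}$, for instance a cubic grid. I would then sample one point per cell, or sample i.i.d.\ points and discard one from each close pair. Discarding costs only a small fraction of points, whose union, being a union of small balls, has measure at most a constant fraction less than $|A_P\cap\Omega|$. So I would run the net argument with a slightly larger $\epsilon$ budget: require that every $A_P$ contain many sample points (an $\epsilon$-approximation rather than a net). Then after deleting points within distance $\delta K^{-1/m}$ of one another, at least one survives in each $A_P$. Tracking the constants ($\mu=4$, the $1/16$, the $3\cdot10^5$) comes from choosing $\epsilon$ against $2^{-2m-2}$ and the Remez base $4m$. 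I expect the separation-versus-net compatibility, with explicit constants, to be the delicate step.
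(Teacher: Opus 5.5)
Your architecture is the same as the paper's. A Warren sign-pattern count bounds the VC-dimension of the superlevel sets of $|F|^2-|G|^2$ by $O(N)$; the paper gets $36N$ from $K$ quartic polynomials in $4N$ real variables, and the $3\cdot 10^5(1+d)$ in $\delta$ is not a VC constant but comes from the sample size and the choice of $t$. A Remez inequality shows that the set $A$ of $x\in\Omega$ with $\big||F(x)|^2-|G(x)|^2\big|\ge e^{-\mu n}\lVert |F|^2-|G|^2\rVert_{L^\infty(\hat\Omega)}$ fills a fixed fraction $t$ of $\Omega$. Then $M=O(N)$ uniform random points are drawn so that every such $A$ receives at least $\tfrac t2 M$ of them. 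Finally one point is deleted from each $\delta N^{-1/m}$-close pair; by Markov there are at most $\tfrac t3 M$ such pairs with positive probability.

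The gap is in the Remez step. From $\lVert P\rVert_{L^\infty(\hat\Omega)}\le (4m|\hat\Omega|/|E|)^{2n}\lVert P\rVert_{L^\infty(E)}$ with $E=\{x\in\hat\Omega: |P(x)|\le e^{-\mu n}\lVert P\rVert_{L^\infty(\hat\Omega)}\}$ you get $(4m|\hat\Omega|/|E|)^{2n}\ge e^{\mu n}$, i.e.\ $|E|\le 4m\,e^{-\mu/2}|\hat\Omega|$. The degree cancels. The small set is therefore not exponentially small in $n$; it is only bounded by a fixed fraction of $\hat\Omega$. This bound yields a lower bound on $|A|$ only if $4m\,e^{-\mu/2}<1-2^{-2m-2}$, i.e.\ $\mu>2\log\big(4m/(1-2^{-2m-2})\big)$. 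That already exceeds $4$ at $m=2$ (where $2\log 8\approx 4.16$) and grows like $2\log m$. So your route proves the qualitative first assertion, even for arbitrary bounded $\Omega$ (apply Remez with $E\subseteq\Omega$ and take $\mu$ large in terms of $m$ and $|\hat\Omega|/|\Omega|$). It does not give $\mu=4$, and the constant $4$ cannot come from ``the Remez base $4m$''.

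The missing ingredient is the sharp form of the Brudnyi--Ganzburg inequality for small exceptional sets, which the paper uses. For convex $\hat\Omega$ and $\omega\subseteq\hat\Omega$ with $|\hat\Omega\setminus\omega|\le 2^{-2m}|\hat\Omega|$, one has $\lVert p\rVert_{L^\infty(\hat\Omega)}\le\exp\big(4k(|\hat\Omega\setminus\omega|/|\hat\Omega|)^{1/(2m)}\big)\lVert p\rVert_{L^\infty(\omega)}$ for $\deg p\le k$. Take $k=2n$, $t=2^{-2m-1}$, $\mu=4$ and $\omega=\Omega\setminus A$. If $|A|<t|\Omega|$, then $|\hat\Omega\setminus\omega|<(2^{-2m-1}+2^{-2m-2})|\hat\Omega|<2^{-2m}|\hat\Omega|$. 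The Remez factor is then $<e^{4n}$, which contradicts $\lVert P\rVert_{L^\infty(\omega)}\le e^{-4n}\lVert P\rVert_{L^\infty(\hat\Omega)}$. Hence $|A|\ge t|\Omega|$ with a dimension-free $\mu$.

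This is not cosmetic. The proof of the main theorem needs $\mu+\lambda^2/2+1/2+\log\lambda<0$. A value $\mu\approx 2\log(8d)$ would force $\lambda=O(d^{-2})$, giving separation $O(d^{-3/2})$ instead of order $\sqrt d$.

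A smaller point of the same nature concerns $\delta$. Because $t=2^{-2m-1}$ is exponentially small in $m$, the stated $\delta$ needs the relative-deviation VC inequality, with $M\approx N t^{-1}\log(1/t)$ and hence $\delta^m\approx t^2$. An additive $\epsilon$-approximation with $\epsilon\approx t$ needs $M\approx N t^{-2}$, giving $\delta^m\approx t^3$. That costs an extra factor $t^{1/m}\approx\tfrac14$ in $\delta$, which the stated constant cannot absorb for large $m$.
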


A crucial ingredient in the proof of \Cref{prop:norming} is that the collection of possible sublevel sets of the function $|F|^2-|G|^2$ is of bounded complexity and hence can be sampled comparably well. This is made precise in the following statement.

\begin{lemma}[VC-dimension bound]
\label{lem:VCdimension}
Let $\Omega$ and $\mathscr V$ be as in \Cref{prop:norming}. Then the family $\mathcal F$ of subsets of $\Omega$ given by
\[
    \mathcal F := \left\{ \left\{x \in \Omega: \big||F(x)|^2 - |G(x)|^2\big| \geq \tau \right\}, F,G \in \mathscr V, \tau > 0 \right\}
\]
has VC-dimension no larger than $36 N$.
\end{lemma}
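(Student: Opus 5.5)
We reduce the claim to a VC-dimension bound for sign patterns of a real vector space of real-valued functions, which is classical (Dudley / Warren type). Write $F=\sum_{j=1}^N a_j\phi_j$ and $G=\sum_{j=1}^N b_j\phi_j$, where $\phi_1,\dots,\phi_N$ is a complex basis of $\mathscr V$. Then
\[
|F|^2-|G|^2=\sum_{j,k}\bigl(a_j\bar a_k-b_j\bar b_k\bigr)\phi_j\bar\phi_k .
\]
This is a real-valued function lying in the real span $W$ of the functions $\operatorname{Re}(\phi_j\bar\phi_k)$ and $\operatorname{Im}(\phi_j\bar\phi_k)$. The real dimension of $W$ is at most $N^2$, which is too large: we want a bound linear in $N$. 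So we do not use linearity in $(F,G)$ jointly. Instead we treat $H_{F,G,\tau}(x):=(|F(x)|^2-|G(x)|^2)^2-\tau^2$ as a polynomial in $4N+1$ real parameters, namely $\operatorname{Re}a,\operatorname{Im}a,\operatorname{Re}b,\operatorname{Im}b,\tau$. For each fixed $x$ it has degree at most $4$ in these parameters. The sets in $\mathcal F$ are exactly $\{x:H_{F,G,\tau}(x)\ge 0\}$.

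The key step is the standard parameter-counting bound (Warren's theorem, or the Goldberg--Jerrum bound). Let $k$ points $x_1,\dots,x_k$ be shattered. Then the $k$ polynomials $p_i(\theta)=H_\theta(x_i)$ in $D=4N+1$ real variables, each of degree at most $4$, realize all $2^k$ sign patterns $\{p_i\ge0\}$. Warren's bound, in the version counting nonstrict patterns after a small perturbation, gives at most $(8e\cdot 4k/D)^D$ patterns for $k\ge D$. Hence
\[
2^k\le\left(\frac{32 e k}{D}\right)^D .
\]
Writing $k=cD$, this gives $2^c\le 32ec$. That forces $c\le 9$ or so: $2^9=512$ while $32e\cdot 9\approx 783$, and $2^{10}=1024>870$, so $c<10$. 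Thus $k< 10D=40N+10$.

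To reach the stated $36N$, we tighten the parameter count. Two reductions are available. First, $\tau$ can be absorbed by scaling, since the sets depend only on $(F,G)/\sqrt\tau$; so we may fix $\tau=1$ and get $D=4N$. Second, we can use $|F|^2-|G|^2\ge1$ or $\le-1$ as a union of two degree-$2$ conditions rather than squaring. We would then bound the number of patterns of the $2k$ polynomials $\pm(|F|^2-|G|^2)(x_i)-1$, which have degree $2$ in $4N$ variables. Warren then gives at most $(8e\cdot 2\cdot 2k/(4N))^{4N}=(8ek/N)^{4N}$ patterns. This yields $2^k\le(8ek/N)^{4N}$. With $k=cN$ we need $2^{c/4}\le 8ec$, i.e.\ $c/4\le\log_2(8ec)$. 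At $c=36$ the left side is $9$ and the right side is $\log_2(783)\approx 9.6$, so this is borderline; at $c=40$ we get $10$ versus $\approx 9.76$, a violation. So this also lands near $40$.

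The main obstacle is therefore purely the constant. My first attempt would use the sharper Milnor--Thom/Warren count, which bounds the number of sign conditions of $s$ polynomials of degree $\le\Delta$ in $D$ variables by $(4e\Delta s/D)^D$, applied to the squared formulation with $D=4N$, $\Delta=4$, $s=k$. This gives $2^{c}\le(4ec)^4$, i.e.\ $c\le 4\log_2(4ec)$. At $c=36$ we get $36\le4\log_2(391)\approx34.4$, which fails, so the bound is $k<36N$, matching the claim. I would verify the precise form of the Warren estimate (strict versus nonstrict patterns, handled by a perturbation $\tau\mapsto\tau-\epsilon$), and check the small-$N$ cases directly.
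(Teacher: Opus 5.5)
Your final argument is exactly the paper's proof: you fix $\tau=1$ by scaling, treat $\bigl(|F(x_\ell)|^2-|G(x_\ell)|^2\bigr)^2-1$ as $k$ real polynomials of degree $4$ in the $4N$ real coefficients, and apply Warren's bound $(4e\Delta s/D)^D$ to get $2^k\le(4ek/N)^{4N}$ and hence $k<36N$; your perturbation $\tau\mapsto\tau-\epsilon$ for strict signs amounts to the paper's rescaling of $(a,b)$ by a factor slightly above $1$, which keeps the polynomial family fixed. The earlier detours giving $40N$ are unnecessary, and the only case left to check is $k<4N$, where Warren's hypothesis $s\ge D$ fails but the bound $k<36N$ holds trivially.
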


\begin{proof}
    It suffices to bound the VC dimension of the family $\mathcal G_N$ of subsets of $\C^N$ given by
    \begin{align*}
        \mathcal G_N = \left\{ \left\{y \in \C^N: \left|\left|\textstyle\sum_{j=1}^{N} a_j y_j\right|^2 - \left|\textstyle\sum_{j=1}^{N} b_j y_j\right|^2\right| \geq 1 \right\}, a,b \in \mathbb C^N \right\}.
    \end{align*}
    To see this, pick a basis $(\phi_{j})_{j=1}^N$ of $\mathscr V$, consider the map $\Phi: \Omega \to \mathbb C^{N}$ given by 
    \begin{align*}
        \Phi(z) = \left( \phi_{j}(x)\right)_{j=1}^N,
    \end{align*}
    and observe that $\mathcal F = \{\Phi^{-1}(X): X \in \mathcal G_N\}$. VC-dimension is non-increasing under pull-back along arbitrary maps, hence $\vc\, \mathcal F \leq \vc\, \mathcal G_{N}$.

    It remains to estimate the VC dimension of $\mathcal G_N$. Suppose $K$ points $y^{1},\ldots,y^{K}$ are shattered by $\mathcal G_N$. Consider the collection of polynomials $p_{1},\ldots,p_{K}$ given by
    \begin{align*}
        p_{\ell}(a,b) = \left|\left|\textstyle\sum_{j=1}^{N} a_j y_j^{(\ell)}\right|^2 - \left|\textstyle\sum_{j=1}^{N} b_j y_j^{(\ell)} \right|^2\right|^2 - 1.
    \end{align*}
    These are $K$ polynomials of degree $4$ in $4N$ real variables, namely $\Re a_j, \Im a_j, \Re b_j, \Im b_j$ for $j=1,\ldots,N$. 
    For each subset $S$ of $\{1,\ldots, K\}$, there exist $a,b$ such that $p_{\ell}(a,b) \geq 0$ if and only if $\ell \in S$. After scaling $a$ and $b$ by a sufficiently small factor $\lambda > 1$, we have $p_{\ell}(a,b) > 0$ if $\ell \in S$ and $p_{\ell}(a,b) < 0$ if $\ell \not \in S$.
    
    Given a collection of polynomials $q_1,\ldots,q_m$ in $r$ real variables, a \emph{sign pattern} is a sequence $\sigma \in \{-1,1\}^m$ such that there exists $x \in \mathbb R^r$ with $\mathrm{sign}(q_j(x)) = \sigma_j$. Assuming the degree of $q_1,\ldots,q_m$ is bounded by $d \in \mathbb N$, Warren showed \cite[Theorem 3]{Warren1968} that the number of possible sign patterns does not exceed 
    \begin{align*}
        \left( \frac{4e d m}{r} \right)^r,
    \end{align*}
    where $e$ is Euler's number.

 Applied to the polynomials $p_{1},\ldots,p_{K}$ above, we find that 
    \begin{align*}
        2^K \leq \left( \frac{16e K}{4N} \right)^{4N},
    \end{align*}
    since all possible sign patterns occur. If $K \geq 4N$, taking logarithms and dividing by $4N$ yields
    \begin{align*}
        \frac K {4N} - \log_2 \frac K {4N} \leq 4 + \frac 1{\log 2}.
    \end{align*}
	Monotonicity of the function $x \mapsto x - \log_2 x$ on $(\frac1{\log 2},\infty)$ and $9 - \log_2 9 > 4 + \frac1{\log 2}$ imply that $K \leq 36N$. It follows that $\vc\, \mathcal G_{N} \leq 36 N$.
\end{proof}

\begin{proof}[Proof of \Cref{prop:norming}]
    A version of Brudnyi--Ganzburg's multivariate Remez inequality \cite[Eq. 4.4]{Ganzburg2001} states that for any convex domain $\Omega \subseteq \mathbb R^m$ and any subset $\omega \subseteq \Omega$ with $|\Omega \setminus \omega| \leq 2^{-2m}|\Omega|$, the bound
    \begin{align*}
        \| p \|_{L^\infty(\Omega)} \leq \exp \left(4 k \left( \frac{|\Omega \setminus \omega|}{|\Omega|} \right)^{\frac1{2m}} \right) \| p \|_{L^\infty(\omega)}
    \end{align*}
    holds for all real polynomials $p$ in $m$ variables of degree at most $k$. It follows that for any $F,G \in \mathscr V$, the Lebesgue measure of the superlevel set
    \begin{align*}
        \left\{x \in \Omega: \big||F(x)|^2-|G(x)|^2\big| \geq \exp(-8 n t^{\frac1{2m}}) \| |F|^2-|G|^2 \|_{L^\infty(\Omega)} \right\}
    \end{align*}
    is at least $t |\Omega|$. If $\Omega$ is not convex, but $t|\Omega|+|\hat \Omega \setminus \Omega| < 2^{-2m} |\hat \Omega|$,  the same argument applied to $\hat \Omega$ yields a lower bound of $t|\Omega|$ for the size of the superlevel sets 
    \begin{align*}
        \left\{x \in \Omega: ||F(x)|^2-|G(x)|^2| \geq \exp\left(- 8n \left( \frac{t|\Omega|+|\hat \Omega \setminus \Omega|}{|\hat \Omega|} \right)^{\frac1{2m}}\right) \| |F|^2-|G|^2 \|_{L^\infty(\hat \Omega)} \right\}
    \end{align*}
    instead. Let $\mathcal F$ denote the collection of such subsets of $\Omega$. By \Cref{lem:VCdimension}, the VC dimension of $\mathcal F$ is bounded by $36 N$.

    Now, we will pick a uniformly discrete subset $\mathcal S_{\mathscr V}$ of $\Omega$ with the property that each set in $\mathcal F$ contains at least one point of $\mathcal S_{\mathscr V}$, and hence,
    \begin{align*}
         \| |F|^2 - |G|^2 \|_{L^\infty(\hat \Omega)} \leq \exp\left(8n \left( \frac{t|\Omega|+|\hat \Omega \setminus \Omega|}{|\hat \Omega|} \right)^{\frac1{2m}}\right) \| |F|^2-|G|^2 \|_{L^\infty(\mathcal S_{\mathscr V})}
    \end{align*}
    for all $F,G \in \mathscr V$.
    We will do so by first picking $M$ points $x_1,\ldots,x_M$ in $\Omega$ uniformly at random, with $M$ a fixed multiple of $N$ to be determined later, then using Vapnik--Chervonenkis' relative inequality to show that with high probability, each set in $\mathcal F$ contains at least $\frac t2 M$ points, and finally verifying that we may discard $\frac t3 M$ points to ensure the remaining points in $\mathcal S_N$ are uniformly $\delta N^{-\frac1{m}}$-separated, for some $\delta > 0$ which depends on $\Omega$ only. \Cref{fig:norming} illustrates this process.

    \begin{figure}
        \centering
        \raisebox{0pt}[0.4\linewidth][0pt]{\includegraphics[width=0.4\linewidth]{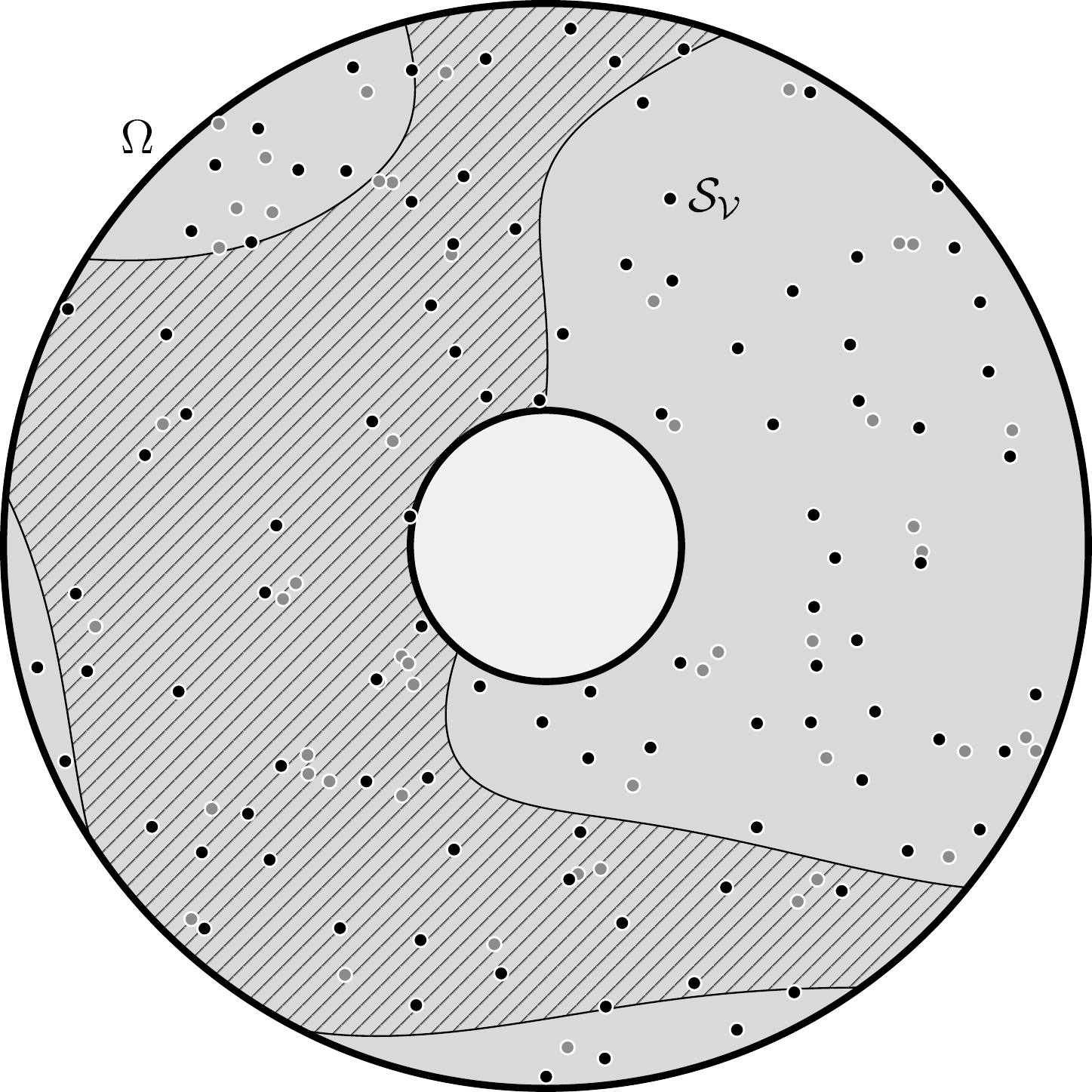}}
        \caption{Construction of the set $\mathcal S_{\mathcal V}$ from \Cref{prop:norming}. The hatched region represents the sublevel set $\{x \in \Omega: ||F(x)|^2-|G(x)|^2| \leq e^{- \mu n} \| |F|^2 - |G|^2 \|_{L^\infty(\hat \Omega)}\}$. Gray points represent the elements of $\mathcal S$ which were removed to ensure uniform separation.}
        \label{fig:norming}
    \end{figure}

    Vapnik--Chervonenkis' inequality \cite{VapnikChervonenkis1974} states that, given any family $\mathcal G$ of subsets of $\Omega$,
    \begin{align*}
        \mathbb P \left[ \sup_{X \in \mathcal G} \frac{\left| \tfrac 1M \# \{j: x_j \in X \} - |X|/|\Omega| \right|}{\sqrt{|X|/|\Omega|}} > \varepsilon \right] \leq 8 m^{\mathcal G}(2M) \exp(-M \varepsilon^2/4),
    \end{align*}
    where $m^{\mathcal G}(M)$, the so-called \emph{growth function} of $\mathcal G$, counts the maximal number of patterns induced by restricting $\mathcal G$ to a subset of $\Omega$ of cardinality $M$, that is,
    \begin{align*}
        m^{\mathcal G}(M) = \max_{x: [M] \to \Omega} \# \{\sigma \in 2^{[M]} | \sigma = x^{-1}(X) \text{ for some } X \in \mathcal G\}.
    \end{align*}
    The Sauer--Shelah lemma bounds $m^{\mathcal G}(k)$ by $\left( \frac{ek}{V} \right)^{V}$ for $k \geq V \geq \vc(\mathcal G)$, hence
    \begin{align*}
        &\mathbb P \left[ \sup_{X \in \mathcal G} \frac{\left| \tfrac 1M \# \{j: x_j \in X \} - |X|/|\Omega| \right|}{\sqrt{|X|/|\Omega|}} > \varepsilon \right] \leq 8 \left( \frac{2eM}{V} \right)^{V} \exp(-M \varepsilon^2/4) \\
        &= 8 \exp\left(V \left( 1 + \log 2 + \log \frac{4}{\varepsilon^2} + \log \frac{\varepsilon^2}{4} \frac{M}{V} - \frac{\varepsilon^2}{4} \frac{M}{V} \right) \right).
    \end{align*}
    With $\varrho(x) := x-\ln(x)$ and $\tau := 1 + 3\log 2 - 2 \log \varepsilon$, the last expression reads $8 \exp(V(\tau - \varrho(\frac{\varepsilon^2}{4} \frac{M}{V})))$. Writing $\vartheta(x) = x + \ln(x) + \frac12$, we have $\varrho \circ \vartheta(x) > x$. To bound the probability of the event above by $\frac{1}{100}$, it thus suffices to choose $\frac{M}{V} \geq \frac{4}{\varepsilon^2} \vartheta\left( \tau + \frac1V \log 800\right)$. Applying this bound to the family $\mathcal F$ with $\varepsilon = \frac12 \sqrt{t}$ and $V = 36 N$, we find that choosing $M = \left\lfloor \frac{3000+1200|\log t|}{t} N \right\rfloor \geq \frac{16}{t} (5 + 2|\log t|) 36N$ ensures that each set in $\mathcal F$ contains at least $\frac t2 M$ points of $\mathcal S$.

    For any $\delta > 0$, the probability for any two points in $\mathcal S$ to be $\delta N^{-\frac{1}{m}}$-close is bounded by $ \delta^{m} N^{-1} |\mathbb B^{m}|/ |\Omega|$. Since there are $\frac{M(M-1)}{2}$ pairs of points in $\mathcal S$, the expected number of such pairs is bounded by $\frac12 M^2 \delta^{m} N^{-1} |\mathbb B^{m}| / |\Omega|$. Upon choosing
    \begin{align*}
        \delta = \left( \frac{tN |\Omega|/|\mathbb B^{m}|}{2M} \right)^{\frac1{m}} \geq \left( \frac{t^2 |\Omega|/|\mathbb B^{m}|}{6000+2400 |\log t|} \right)^{\frac1{m}},
    \end{align*}
    the expected number of pairs is at most $\frac{t}{4}M$, and by Markov's inequality, the probability that the number of $\delta$-close pairs does not exceed $\frac t3 M$ is at least $25\%$. Hence, there exists a set $\mathcal S$ with at most $\frac t3 M$ pairs of $\delta$-close points and at least $\frac t2 M$ points in each subset $X \in \mathcal F$. Deleting a subset of $\mathcal S$ of size $\frac t3 M$ covering the set of pairs of $\delta$-close points yields the desired set $\mathcal S_{\mathscr V}$.

    To obtain the claimed bounds on $\delta$ and $\mu$ under the assumption $|\hat \Omega \setminus \Omega| \leq 2^{-2m-2} |\hat \Omega|$, we choose $t = 2^{-2m-1}$. It is easy to check that with this choice, $t|\Omega| + |\hat \Omega \setminus \Omega| < 2^{-2m} |\hat \Omega|$, making this a permissible choice of $t$, and to verify the claimed lower bound on $\delta$.
\end{proof}

\section{Tail bounds on the reproducing kernel}\label{sec:tails}

\subsection{True polyanalytic Fock spaces in higher dimensions}

Recall the orthonormal basis of $L^2(\mathbb C;e^{-|\cdot|^2})$ given by
\begin{align*}
    \phi_{m,n}(z,\bar z)
    &=
    \frac{1}{\sqrt{m!n!}}\sum_{r=0}^{\min(m,n)}
    (-1)^r r!\binom mr\binom nr z^{m-r}\bar z^{n-r}, & &m, n \in \mathbb N.
\end{align*}
For $\mathbf n, \mathbf q \in \mathbb N^d$, set $\Phi_{\mathbf n,\mathbf q}(z,\bar z) = \prod_{i=1}^d \phi_{n_i,q_i}(z_i,\bar z_i)$. For $\mathbf q \in \mathbb N^d$, let $\mathcal F^d_{\mathbf q}$ denote the closed span of $\Phi_{\mathbf n,\mathbf q}$, $\mathbf n \in \mathbb N^d$. For $L \in \mathbb N$, denote $\mathcal F^d_L = \bigoplus_{|\mathbf q| \leq L}{\mathcal F^d_{\mathbf q}}$. This is a reproducing kernel Hilbert space with orthonormal basis $\left\{ \Phi_{\mathbf n, \mathbf q}: |\mathbf q| \leq L, \mathbf n \in \mathbb N^d \right\}$ 
and reproducing kernel
\begin{align}
    \label{eq:reproducing_kernel}
    K_{L}^d(z,w) = \sum_{|\mathbf q| \leq L} \sum_{\mathbf n \in \mathbb N^d} \Phi_{\mathbf n, \mathbf q}(z,\bar z) \overline{\Phi_{\mathbf n, \mathbf q}(w,\bar w)}.
\end{align}
The space $\mathcal F^d_L$ is $U(d)$-invariant. Indeed, $\mathcal F^d_L$ is precisely the closed subspace of $L^2(\mathbb C^d;e^{-|\cdot|^2})$ consisting of polyanalytic functions of order $L$. Since polyanalyticity and the inner product on $L^2(\mathbb C^d;e^{-|\cdot|^2})$ are preserved under holomorphic rotation, the space $\mathcal F^d_L$ is as well. The $U(d)$-invariance can be exploited to obtain convenient tail bounds on the series \eqref{eq:reproducing_kernel} defining the reproducing kernel. 

For $N \in \mathbb N$, let $P_{L,N}$ denote the orthogonal projection onto $\spanop\{\Phi_{\mathbf n,\mathbf q} : |\mathbf q| \leq L,\, |\mathbf n| \leq N\}$, and let $K_{L,N}(z,w) = \sum_{|\mathbf q| \leq L}\sum_{|\mathbf n| > N} \Phi_{\mathbf n,\mathbf q}(z,\bar z)\overline{\Phi_{\mathbf n,\mathbf q}(w,\bar w)}$ denote the reproducing kernel of its orthogonal complement in $\mathcal F^d_L$.

\begin{lemma}
    \label{lem:tail_bound}
    Let $L \in \mathbb N$ and $N\geq \max(L + 9,3 L)$. Then
    \begin{align*}
        \|K_{L,N}(z,\cdot)\|_{L^2(\mathbb C^d;e^{-|\cdot|^2})} \leq C_{L} \left( 1 - \frac{e|z|^2}{N}\right)^{-\frac12}  N^{\frac L2} \left( \frac{e |z|^2}{ N} \right)^{\frac{N-2L}2}
    \end{align*}
    holds for all $|z| \leq \sqrt{N/e}$.
\end{lemma}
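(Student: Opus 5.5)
Since the tail space is the closed span of an orthonormal family, the reproducing property gives
\[
\|K_{L,N}(z,\cdot)\|^2_{L^2(\C^d;e^{-|\cdot|^2})} = K_{L,N}(z,z) = \sum_{|\mathbf q|\le L}\sum_{|\mathbf n|>N} |\Phi_{\mathbf n,\mathbf q}(z,\bar z)|^2 .
\]
The plan is to bound this diagonal sum pointwise. The first step is to use $U(d)$-invariance. Each block indexed by $|\mathbf q|+|\mathbf n| = $ const is not itself rotation invariant. However, the projection $P_{L,N}$ commutes with $U(d)$ (it is a spectral projection of $\sum_j$ of the creation-type number operators $z_j\partial_{z_j}$ counts, i.e.\ $|\mathbf n|-|\mathbf q|$ is the $U(1)$ weight and $|\mathbf n|$, $|\mathbf q|$ are each $U(d)$-invariant gradings). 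Hence $K_{L,N}(z,z)$ depends only on $|z|$, and I may take $z=(r,0,\dots,0)$ with $r=|z|$. At such a point, $\phi_{n,q}(0)=0$ unless $n=q$, and $\phi_{q,q}(0)=(-1)^q$. So only terms with $n_i=q_i$ for $i\ge2$ survive, and they contribute $|\phi_{n_1,q_1}(r)|^2$. Summing over the remaining indices gives
\[
K_{L,N}(z,z) = \sum_{|\mathbf q|\le L}\ \sum_{n_1 > N - (|\mathbf q|-q_1)} |\phi_{n_1,q_1}(r)|^2 ,
\]
and the number of $\mathbf q$ with given $q_1$ and given $|\mathbf q|$ is a constant depending on $L$ and $d$. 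I will absorb this into $C_L$, accepting a possibly $d$-dependent constant since the statement is silent on this. This reduces everything to one-dimensional estimates with $n_1 > N-L$ and $q_1\le L$.

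The second step is the one-dimensional pointwise bound. From the explicit formula, with $n\ge q$,
\[
|\phi_{n,q}(r)| \le \frac{1}{\sqrt{n!q!}}\sum_{k=0}^{q} k!\binom nk\binom qk r^{n+q-2k}.
\]
Using $\binom nk\le n^k/k!$, $\binom qk\le 2^q$, and $r^{-2k}\le$ the relevant factors, I get $|\phi_{n,q}(r)|\le C_L\, r^{n-q}\, n^{q/2}(1+r^{2})^{q}\cdots/\sqrt{n!}$. The cruder form I will aim for is $|\phi_{n,q}(r)|^2 \le C_L\, n^{L}\, r^{2n-2L}/n!$, valid for $r^2\le n/e$. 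It uses that the dominant term is $k=q$, giving $\sqrt{n!/q!}\,r^{n-q}/(n-q)!$, which is at most $n^{q/2} r^{n-q}/\sqrt{(n-q)!}$. Then Stirling, $n!\ge (n/e)^n$, gives
\[
|\phi_{n,q}(r)|^2 \le C_L\, n^{L}\,(er^2/n)^{n-2L}
\]
roughly, after trading factorials for powers. Here the exponent shift by $2L$ absorbs the $(n-q)!$ versus $n!$ discrepancy together with the $r^{-2q}$ slack; the hypothesis $N\ge 3L$ keeps this legitimate.

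The final step is to sum the geometric-like series over $n>N-L$. Since $n^{L}(er^2/n)^{n-2L}$ decreases in $n$ with ratio at most $er^2/N$ once $n\ge N$, the sum is bounded by the leading term times $(1-er^2/N)^{-1}$. Taking square roots yields the claimed factor $(1-e|z|^2/N)^{-1/2}N^{L/2}(e|z|^2/N)^{(N-2L)/2}$. The condition $N\ge L+9$ handles the ratio comparison $n^L/(n+1)^L$ and the shift from $N-L$ to $N$.

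I expect the main obstacle to be the second step: getting a clean uniform bound on $\phi_{n,q}(r)$ in which all $r$-dependence collapses into $(er^2/n)^{n-2L}$. This requires controlling the lower-order terms $k<q$, which carry larger powers of $r$. I would bound each such term by the $k=q$ term times $(r^2/n)^{q-k}\binom qk$, which is at most $2^L$ using $r^2\le n/e$.
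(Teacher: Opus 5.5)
Your reduction to $z=re_1$ and to one-dimensional sums over $q\le L$, $n>N-L$ is the same as the paper's. Your claim that $P_{L,N}$ commutes with $U(d)$ is right; the clean reason is that its range is the span of the monomials $z^{\mathbf a}\bar z^{\mathbf b}$ with $|\mathbf a|\le N$, $|\mathbf b|\le L$. Bounding $\phi_{n,q}(r)$ from the explicit sum, with the $k=q$ term dominant, is a legitimate substitute for the paper's Cauchy estimates on the Rodrigues formula. (The ratio for the $k<q$ terms should have $n-q+1$ in place of $n$, which is harmless.)

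One intermediate claim is false, however. The bound $|\phi_{n,q}(r)|^2\le C_L n^L r^{2n-2L}/n!$ already fails for $q=L=1$: since $\phi_{n,1}(r)=r^{n-1}(r^2-n)/\sqrt{n!}$, at $r^2=n/e$ one gets a factor $n^2$, not $n$. What the explicit sum actually gives is $|\phi_{n,q}(r)|^2\le C_L n^{2q}r^{2n-2q}/n!\le C_L n^{2L}r^{2n-2L}/n!$, which is the paper's bound. Your per-term bound $C_L n^L(er^2/n)^{n-2L}$ does follow from it.

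The genuine gap is in the final summation. To reach $C_L n^L(er^2/n)^{n-2L}$ you have already spent the factor $r^{2L}$ via $r^2\le n/e$, yet your reduction forces the sum to start at $n=N-L+1$. For $N-L<n<N$, your bound for those terms has size $(er^2/N)^{n-2L}$ with $n-2L$ as small as $N-3L+1$. It therefore exceeds the target $N^L(er^2/N)^{N-2L}$ by a factor of order $(N/(er^2))^{L-1}$. For $L\ge 2$ this factor is unbounded: fix $|z|$ and let $N\to\infty$, or let $|z|\to 0$. So no constant $C_L$ absorbs it, and the hypothesis $N\ge L+9$ cannot repair a deficit in the exponent.

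The fix is the paper's bookkeeping: keep $r^{2L}$ outside the sum.
\begin{itemize}
\item Use $n^{2L}r^{2n-2L}/n!\le e^{2L}r^{2L}(er^2/n)^{n-2L}$.
\item For $n\ge N-L+1$, use $(er^2/n)^{n-2L}\le e^{L}(er^2/N)^{n-2L}$.
\item The sum is then at most $C_L\, r^{2L}(er^2/N)^{N-3L}(1-er^2/N)^{-1}$.
\item Only at this point substitute $r^{2L}=e^{-L}N^L(er^2/N)^L$.
\end{itemize}
The $L$ powers of $er^2/N$ recovered in the last step are exactly what pays for starting the sum at $N-L+1$ rather than $N$.
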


\begin{proof}
    Since $\mathcal F^d_L$ is $U(d)$-invariant and the reproducing kernel is unique, $\| K_L(z,\cdot) \|_{L^2(\mathbb C^d;e^{-|\cdot|^2})} = \| K_L(|z| e_1, \cdot) \|_{L^2(\mathbb C^d;e^{-|\cdot|^2})}$. Write $x = |z|$. Since 
    \begin{align*}
        \phi_{n,m}(0,0) = \begin{cases} 0 & n \neq m, \\
                                                                 (-1)^n & n = m,
                                                    \end{cases}
    \end{align*}
    the only nonzero summands $\Phi_{\mathbf n,\mathbf q}$ in the series expression for $(1-P_{N}) K_L(z,\cdot)$ occur when $\mathbf n = (m,q_2,\ldots,q_d)$. In this case, $\Phi_{\mathbf n,\mathbf q}(x e_1) = \pm\phi_{m,q_1}(x,x)$. Hence,
    \begin{align*}
        \| K_{L,N}(x e_1,\cdot) \|^2 = \sum_{\substack{0 \leq q_1 \leq L \\ |\mathbf q'| \leq L - q_1 \\ m > N-|\mathbf q'|}} \phi_{m,q_1}(x,x)^2 \leq C_L \sum_{\substack{0 \leq q_1 \leq L \\ m > N-L}} \phi_{m,q_1}(x,x)^2.
    \end{align*}

    We begin by estimating $\phi_{m,q_1}(x,x)$ for $x \in (0,\sqrt{m})$ by applying Cauchy's estimates to the formula $\phi_{m,q_1}(z) = \frac{(-1)^{q_1}}{\sqrt{q_1!}} e^{|z|^2} \partial_z^{q_1} (e^{-|z|^2} \frac{z^m}{\sqrt{m!}})$ at $z = \bar z = x$, using a polydisk of radius $\frac{x}{m}$ around $(z,\bar z) = (x,x)$:
    \begin{align*}
        |\phi_{m,q_1}(x,x)| &= \frac{1}{\sqrt{q_1!}} e^{|x|^2} \left| \partial_z^{q_1} (e^{-|x|^2} \frac{x^m}{\sqrt{m!}}) \right| \leq \sqrt{q_1!} e^{|x|^2} \frac{m^{q_1}}{x^{q_1}} \sup_{(z,\zeta) \in \mathbb D_{(\frac xm,\frac xm)}(x,x)}\left| e^{-z \zeta} \frac{z^m}{\sqrt{m!}} \right|.
    \end{align*}
    To bound the supremum on the right-hand side, observe that $\Re(z\zeta) \geq (x-\frac{x}{m})^2$ if $m \geq 2$, and that by our assumption $x^2 \leq \frac{N}{e} \leq m$, hence $(x-\frac{x}{m})^2 \geq x^2 - 2\frac{x^2}{m} \geq x^2 - 2$. Furthermore, $|z|^m \leq (1+\frac1m)^m x^m \leq e x^m$. Thus, 
    \begin{align*}
        |\phi_{m,q_1}(x,x)| &\leq e^3 \sqrt{q_1!} \frac{m^{q_1}}{x^{q_1}} \frac{x^m}{\sqrt{m!}} \leq e^3 \sqrt{L!} \frac{m^{L}}{x^{L}} \frac{x^m}{\sqrt{m!}}.
    \end{align*}
    
    This estimate allows us to bound the tail of the reproducing kernel $K_{L,N}$, yielding
    \begin{align*}
        \| K_{L,N}(x e_1,\cdot) \|^2 &\leq C_{L} \sum_{m=N+1-L}^{\infty} m^{2L} \frac{x^{2m-2L}}{m!} \leq C_L \, x^{2L} \sum_{m=N+1-L}^{\infty} \left( \frac{ex^2}{m} \right)^{m-2L} \\
        &\leq C_L x^{2L} \left( \frac{ex^2}{N} \right)^{N-3L} \left( 1 - \frac{ex^2}{N}\right)^{-1} \leq C_L  \left( 1 - \frac{ex^2}{N}\right)^{-1} N^{L} \left( \frac{ex^2}{N} \right)^{N-2L}.
    \end{align*}
    Here, the third inequality uses that $\left(\frac{ex^2}{m}\right)^{m-2L} = \left(\frac{N}{m}\right)^{m-2L}\left(\frac{ex^2}{N}\right)^{m-2L} \leq e^{L}\left(\frac{ex^2}{N}\right)^{m-2L}$ for all $m \geq N+1-L$: for $m \geq N$ this is clear, and for $N+1-L \leq m < N$ we have $\left(\frac Nm\right)^{m-2L} \leq \left(1 + \frac{N-m}{m}\right)^{m} \leq e^{N-m} \leq e^{L}$. The claim follows by taking square roots.
    \end{proof}

    The true polyanalytic Bargmann transform $\mathcal B_q$ maps the orthonormal basis of $L^2(\mathbb R^d)$ given by the Hermite functions $(h_{\mathbf n})_{\mathbf n \in \mathbb N^d}$ to the orthonormal basis $(\Phi_{\mathbf n, \mathbf q})_{\mathbf n \in \mathbb N^d}$. By linearity, the STFT with a window $w(x) = e^{-\pi |x|^2} h(x)$ such that $h$ expands into tensor-product Hermite polynomials $H_{\mathbf q}$ as $h(x) = \sum_{|\mathbf q| \leq L} b_{\mathbf q} H_{\mathbf q}(x)$ maps $(h_{\mathbf n})_{\mathbf n \in \mathbb N^d}$ to the basis 
    \begin{align*}
        \Psi_{\mathbf n, h} := \sum_{|\mathbf q| \leq L} b_{\mathbf q} \Phi_{\mathbf n, \mathbf q}, \ \mathbf n \in \mathbb N^d.
    \end{align*}
    If $h$ is $L^2$-normalized, this basis is orthonormal, and the map $\mathcal B_h$ an isometry. Note that $\Psi_{\mathbf n,h}$ is a polynomial in $z$ and $\bar z$ of total degree at most $|\mathbf n| + L$, a fact that will be crucial in the sequel.

\section{Proof of the main theorem}\label{sec:main_proof}

In this section, we conclude the proof of \Cref{thm:main} in its restated form. The argument proceeds by first truncating the Taylor series of $F$ and $G$ to order $N$ -- denote the result by $F_N$ and $G_N$, respectively -- and later letting $N \to \infty$. With the help of \Cref{prop:norming}, we construct a uniformly $\delta$-separated sampling set $\mathcal S_{N,\lambda}$, with $\delta$ independent of $N$, such that $\||F_N|^2-|G_N|^2\|_{L^\infty(B_{\lambda \sqrt{N}}(0))}$ can be estimated in terms of $\||F_N|^2-|G_N|^2\|_{L^\infty(\mathcal S_{N,\lambda})}$ with a loss that is exponential in $N$, but independent of $\lambda$. Assuming $|F|=|G|$ on $\mathcal S_{N,\lambda}$, the tail bounds from \Cref{lem:tail_bound} yield that $\||F_N|^2-|G_N|^2\|_{L^\infty(\mathcal S_{N,\lambda})}$ is exponentially small in $N$. This truncation error improves as $\lambda$ shrinks, and it turns out that it suffices to choose an absolute, dimension independent value of $\lambda$ to close the argument.

\begin{proof}[Proof of \Cref{thm:main}]
    Let $N \in \mathbb N$, $\lambda \in (0,\frac1{\sqrt e})$ and consider $\mathcal A_{N,\lambda} = B_{\lambda \sqrt{N}}(0) \setminus B_{\frac \lambda 8 \sqrt{N}}(0)$. Denote the degree of the polynomial $h$ by $L$. Let $\mathcal V_h^N$ denote the span of $\Psi_{\mathbf n,h}$, $|\mathbf n| \leq N$, where $\Psi_{\mathbf n,h}$ are the basis elements of $\mathcal F_h$ introduced in \Cref{sec:tails}. Equivalently, $\mathcal V_h^N$ is the image of $\mathrm{span}\{h_{\mathbf n}: |\mathbf n| \leq L\}$ under the generalized Bargmann transform $\mathcal B_h$.

    The space $\mathcal V^{N}_{h}$ has complex dimension $\binom{N+d}{d}$ and consists of polynomials of degree at most $N+L$. The domain $\Omega = \mathcal A_{N,\lambda}$ satisfies $|\hat{\Omega} \setminus \Omega| = 2^{-6d} |\hat \Omega| \leq 2^{-4d-2} |\hat \Omega|$ and $|\Omega|/|\mathbb B^{2d}| \geq \frac{63}{64} \lambda^{2d}N^{d}$. By \Cref{prop:norming}, there exists a $\delta$-separated set $\mathcal S_{N,\lambda} \subseteq \mathcal A_{N,\lambda}$,
    \begin{align*}
        \delta \geq \frac{\lambda N^{\frac12}}{16} \left( \frac{63/64}{3\cdot 10^5 (1 + d)} \right)^{\frac1{2d}} \binom{N+d}{d}^{-\frac{1}{2d}}
    \end{align*}
    such that 
    \begin{align}
        \label{eq:norming_applied}
    \left\lVert |F|^2-|G|^2 \right\rVert_{L^\infty(B_{\lambda \sqrt{N}}(0))}
    \le
    e^{4 (N+L)}
    \left\lVert |F|^2-|G|^2 \right\rVert_{L^\infty(\mathcal S_{N,\lambda})}
    \end{align}
    for all $F,G \in \mathcal V^{N}_{h}$. Using the inequality $\binom{N+d}{d} \leq (N+d)^d e^d/d^d$, we find that
    \begin{align*}
        \delta &\geq \frac{\lambda N^{\frac12}}{16} \left( \frac{63/64}{3\cdot 10^5 (1 + d)} \right)^{\frac1{2d}} \left( 1 + \frac{d}{N} \right)^{-\frac12} \frac{\sqrt{d}}{N^{\frac12}\sqrt{e}} \geq \frac{4\lambda}{10^5} \sqrt{d}
    \end{align*}
    for sufficiently large $N$.
    
    In the following, we assume without loss of generality that $\|F\|_{L^2(\mathbb C^d;e^{-|\cdot|^2})}, \|G\|_{L^2(\mathbb C^d;e^{-|\cdot|^2})} \leq 1$ and write $F_N := P_N F$ and $G_N := P_N G$. By the triangle inequality,
    \begin{align*}
        \left\lVert |F_N|^2-|G_N|^2 \right\rVert_{L^\infty(\mathcal S_{N,\lambda})} &\leq \left\lVert |F|^2-|G|^2 \right\rVert_{L^\infty(\mathcal S_{N,\lambda})} \\ &+ \left\lVert |F|^2-|F_N|^2 \right\rVert_{L^\infty(\mathcal S_{N,\lambda})} + \left\lVert |G|^2-|G_N|^2 \right\rVert_{L^\infty(\mathcal S_{N,\lambda})}.
    \end{align*}
    The elementary inequality $||z|^2-|w|^2| \leq 2 \max(|z|,|w|) |z-w|$ yields
    \begin{align*}
        \left\lVert |F|^2-|F_N|^2 \right\rVert_{L^\infty(\mathcal S_{N,\lambda})} \leq 2 \max(\|F\|_{L^\infty(B_{\lambda \sqrt{N}}(0))},\|F_N\|_{L^\infty(B_{\lambda \sqrt{N}}(0))}) \| F - F_N \|_{L^\infty(B_{\lambda \sqrt{N}}(0))}.
    \end{align*}
    Combining the reproducing kernel bound $|F(z)| \leq C_{L}(1+|z|)^{\frac L2} e^{|z|^2/2} \|F\|_{L^2(\mathbb C^d;e^{-|\cdot|^2})}$ with the tail bound from \Cref{lem:tail_bound} yields
    \begin{align*}
        \left\lVert |F|^2-|F_N|^2 \right\rVert_{L^\infty(\mathcal S_{N,\lambda})} &\leq C_{\lambda,L} N^{L} e^{\lambda^2N/2} (e \lambda^2)^{N/2} \\
        &= C_{\lambda,L} N^{L} \exp((\lambda^2/2 + 1/2 + \log \lambda) N).
    \end{align*}
    Combining this estimate with \eqref{eq:norming_applied}, we find that 
    \begin{align*}
        \left\lVert |F_N|^2-|G_N|^2 \right 
        \rVert_{L^\infty(B_{\lambda \sqrt{N}}(0))}
        &\leq e^{4(N+L)} \left\lVert |F|^2-|G|^2 \right
        \rVert_{L^\infty(\mathcal S_{N,\lambda})} \\
        &+ C_{\lambda,L} N^{L} \exp\left(\left(\lambda^2/2 + 9/2 + \log \lambda\right) N\right).
    \end{align*}

    Choose any sequence $(N_j)_{j=1}^\infty$ with $N_{j+1} \geq 100 N_{j}$, ensuring that $\mathcal A_{N_{j+1},\lambda}$ and $\mathcal A_{N_{j},\lambda}$ are well-separated, and set $\lambda = \frac{1}{100}$, so that
    \begin{align*}
        \lambda^2/2 + 9/2 + \log \lambda \leq -\frac1{10} < 0.
    \end{align*}
    Let $\mathcal S = \bigcup_{j=1}^\infty \mathcal S_{N_j,\lambda}$. Then any functions $F,G \in \mathcal F^d_{\mathbf q}$ with $|F|^2 = |G|^2$ on $\mathcal S$ satisfy 
    \begin{align*}
        \left\lVert |F_{N_j}|^2-|G_{N_j}|^2 \right 
        \rVert_{L^\infty(B_{\lambda \sqrt{N_j}}(0))} &\leq C_{\lambda,L} N_j^{L} e^{- N_j/10}.
    \end{align*}
    Since $F_{N_j} \to F$ and $G_{N_j} \to G$ uniformly on compact subsets of $\mathbb C^d$, $|F|^2 - |G|^2$ vanishes identically. It is well known that continuous phase retrieval holds in the polyanalytic Fock space $\mathcal F^d_{\mathbf q}$, hence $F = \tau G$ for some $\tau \in \mathbb C$ with $|\tau| = 1$.
\end{proof}

\section{Lean formalization}

This appendix describes the formalization in Lean 4 \cite{MouraUllrich2021} of \Cref{thm:main}, building on the mathematical library Mathlib \cite{mathlib2020}.

\subsection{Lean preliminaries}

In Lean, everything has a type. Types are a primitive notion, in the spirit of sets. For example, $0$ has type $\mathbb N$, expressed as $(0:\mathbb N)$. The function $f(x) := x^2$ can have type $\mathbb R \to \mathbb R$ or $\mathbb N\to \mathbb N$, depending on the context, which in Lean must be made explicit. 
Theorems are defined as follows, with named variables and hypotheses
\begin{leancode}
theorem TheoremName -- Two dashes start a comment, (this is a comment)
    {variable1 : Type} (variable2 : Type) (variable3 : Type) : 
    -- The colon separates the hypotheses and the conclusion
    conclusion
\end{leancode}

In order to state our theorem, we will require some Lean 4 definitions. The first one is \lean{Fin d}, which denotes the set of numbers $0,\dots d-1$ as Lean's \emph{canonical} set with $d$ elements. Using this, \lean{MvPolynomial (Fin d) ℂ} denotes the set of multi-variate polynomials with variables indexed by \lean{(Fin d)} over $\mathbb C$. In other words, polynomials in $d$ variables. Below, we will use \lean{ℂ^[d]} as a shortcut notation for \lean{EuclideanSpace (Fin d) ℂ}, the $d$-dimensional complex Euclidean space.

The statement of our main theorem requires three other definitions that are not in Mathlib, so instead are defined below. The first of these is \lean{UniformlyDiscrete} which takes a real number $\epsilon$, a set $S$ on a metric space, and returns the fact that the set is uniformly discrete with $\epsilon$ separation. Given \lean{(h : MvPolynomial (Fin d) ℂ)} (i.e.~a complex polynomial in $d$ variables) we can define \lean{(PolyFockSpace h)} as the \lean{Set (X → ℂ)} of functions that can be written as $\ell^2$-combinations of functions $\Psi_{\mathbf n, h}$.  The proposition \lean{PhaseRetrievalSet} will take as input a set of functions (of type \lean{Set (X → ℂ)}) and a subset of $X$ (of type \lean{Set X}) and states that all functions can be recovered (up to a globlal phase) from their absolute values in the subset.
\subsection{The formal statement of the main theorem}

\begin{leancode}
theorem DiscretePhaseRetrieval
    (d : ℕ) (h : MvPolynomial (Fin d) ℂ) (hnonzero : h ≠ 0) :
    ∃ S : Set ℂ^[d],
      UniformlyDiscrete (4 * Real.sqrt d / 10 ^ 7) S ∧
        PhaseRetrievalSet (PolyFockSpace h) S
\end{leancode}

We next explain the definitions entering this statement.

\subsection{The complex Hermite basis}

We start with the normalized complex Hermite polynomials from
Section~\ref{sec:tails}. The polynomial $\phi_{m,n}(z,\bar z)$ is defined by
the same finite sum as in the paper.

\begin{leancode}
def HermitePoly (m n : ℕ) (z : ℂ) : ℂ :=
    (Complex.sqrt ((m)! * (n)!))⁻¹ *
      ∑ r ∈ range (1 + min m n),
      (-1) ^ r * (r)! * (choose m r) * (choose n r) *
      z ^ (m - r) * (star z) ^ (n - r)
\end{leancode}

Here \lean{(m)!} denotes $m!$, \lean{choose m r} denotes
$\binom mr$, and \lean{star z} is the complex conjugate of $z$.
The finite set \lean{range (min m n + 1)} consists of the integers
$0,\ldots,\min(m,n)$, including the upper endpoint. 
The basis in $d$ variables is obtained by taking tensor products. In the
following definitions, the dimension is an implicit parameter, declared by \lean{variable {d : ℕ}}. The function $\Psi$ is indexed by $n,q \in \N^d$. The space $\N^d$ is formally the space of functions from $\{0, \dots, d-1\}$ to $\N$, and in Lean, is written as \lean{Fin d → ℕ}.

\begin{leancode}
def Φ (n q : Fin d → ℕ) (ℂ^[d]) : ℂ :=
    ∏ i : Fin d, HermitePoly (n i) (q i) (z i)
\end{leancode}

Accordingly, \lean{Φ n q z} represents
$\Phi_{\mathbf n,\mathbf q}(z,\bar z)
=\prod_{j=1}^d\phi_{n_j,q_j}(z_j,\bar z_j)$.
The first multi-index labels the coefficients in the expansion, while the
second specifies the fixed polyanalytic level.

To work with general polynomial windows $w(x) = e^{-\pi|x|^2} w(x)$, we define the basis elements $\Psi_{\mathbf n,h} = \sum_{|\mathbf q| \leq \deg h} h_{\mathbf q} \Psi_{\mathbf n,\mathbf q}$ introduced in \Cref{sec:tails}, normalized by the $L^2$-norm of $w$. The coefficient of $x^q$ of a polynomial \lean{h} is extracted via \lean{h.coeff q}. The notation \lean{∑'} denotes infinite sums. Note that in this case, the support of \lean{q} is finite, so the limit of  \lean{∑'} is trivial.

\begin{leancode}
def Ψ (h : MvPolynomial (Fin d) ℂ) (n : Fin d → ℕ) (z : ℂ^[d]) : ℂ :=
    (Real.sqrt (∑' q, ‖h.coeff q‖ ^ 2))⁻¹ * ∑' q, h.coeff q * Φ n q z
\end{leancode}

Now we can define the \lean{PolyFockSpace h} as the set of functions in \lean{ℂ^[d]→ℂ} which are $\ell^2$-weighted linear combinations of the functions \lean{Ψ h}. The functions \lean{Ψ h} are parametrized by $\N^d$, and thus the coefficients are in \lean{ℓ²(Fin d → ℕ,ℂ)}.

\begin{leancode}
def PolyFockSpace (h: MvPolynomial (Fin d) ℂ) : (Set (ℂ^[d]→ℂ)) := 
    {f | ∃ w : ℓ²(Fin d → ℕ,ℂ), f = fun z ↦ ∑' n,  w n * Ψ h n z}
\end{leancode}

\subsection{Uniform separation and phase retrieval}

Uniform discreteness in Lean can be defined for an arbitrary type \lean{X} which is known to be a metric space \lean{[MetricSpace X]}. This endows \lean{X} with a distance function \lean{dist : X → X → ℝ}. The statement \lean{UniformlyDiscrete} is then a proposition that takes a $\epsilon$ and a subset of $X$ as input, and returns the fact that for all $x\neq y$, the distance between $x$ and $y$ is greater than zero. The definition \lean{UniformlyDiscrete} does not require $\epsilon>0$ and in particular may be a vacuous statement, but the main theorem \lean{DiscretePhaseRetrieval} only considers the case $\epsilon = 4\sqrt{d}/10^7>0$.

\begin{leancode}
def UniformlyDiscrete 
    {X:Type*} [MetricSpace X] 
    (ε : ℝ) (S : Set (X)) : Prop := 
      ∀ x ∈ S, ∀ y ∈ S, x ≠ y → ε ≤  dist x y
\end{leancode}

The phase-retrieval property is defined with a similar level of generality, for an arbitrary class of
complex-valued functions on a type \lean{X}.

\begin{leancode}
def PhaseRetrievalSet {X : Type*} (F : Set (X → ℂ))
    (S : Set X) : Prop :=
      ∀ f ∈ F, ∀ g ∈ F, (∀ s ∈ S, ‖f s‖ = ‖g s‖) → ∃ θ : ℂ, ‖θ‖ = 1 ∧ f = θ • g
\end{leancode}

For a complex number, Lean's norm \lean{‖f s‖} is the absolute value
$|f(s)|$. The hypothesis therefore expresses equality of the sampled
magnitudes on $S$. In the conclusion, \lean{•} denotes scalar multiplication
of functions, so \lean{f = θ • g} means $f(x)=\theta g(x)$ for every $x$.
The scalar $\theta$ has modulus one and is independent of $x$.
Taking \lean{X} to be \lean{ℂ^[d]} and the function class to be
\lean{PolyFockSpace q} gives precisely the phase-retrieval property in the
statement above. The STFT formulation in the introduction follows from
\eqref{eq:intro_bargmann}; the displayed Lean theorem is stated directly for
the Fock space.

\subsection{Interpretation in STFT language}

The formulation of \Cref{thm:main} (STFT-version), technically a corollary of \Cref{thm:main}, was formalized in Lean as well. The formulation is structurally similar to the formalization of \Cref{thm:main} (Polyanalytic-version), switching the space of polyanalytic functions for the range of the STFT with window given by $h$.

\begin{leancode}
theorem DiscretePRSTFT
    (d : ℕ) (h : ℂ[Fin d]) (hnonzero : h ≠ 0) :
      ∃ S : Set ℂ^[d],
        UniformlyDiscrete (4 * Real.sqrt d / 10 ^ 7) S ∧
        PhaseRetrievalSet (Set.range (STFT (window h))) S 
\end{leancode}

 In order to state the theorem, one must define the STFT in Lean as a map $L^2(\mathbb R^d) \to L^2(\mathbb R^d) \to (\mathbb C^d \to \mathbb C)$ (i.e.~a map that takes a window and then returns a map from $L^2$ to the STFT, encoding the space/frequency as a real/imaginary part). To define the STFT, we first define time-frequency shifts in $L^2(\mathbb R^d)$. This is defined using \lean{DomAddAct.mk}, which turns $\mathbb R^d$ into a left-acting group in $L^2$, via the action \lean{(DomAddAct.mk x +ᵥ f)(y)  = f (y-x)}. Using this, as well as the Fourier transform \lean{F}, we define

\begin{leancode}
def timeFreqShift {d : ℕ} (x ξ : ℝ^[d]) (f : L²(ℝ^[d],ℂ)) : L²(ℝ^[d],ℂ):=
    F ((DomAddAct.mk ξ) +ᵥ (F⁻ (DomAddAct.mk -x) +ᵥ f))
\end{leancode}

This allows us to define the STFT directly, as a dot product.

\begin{leancode}
def STFT {d : ℕ} (g f : L²(ℝ^[d],ℂ)) : ℂ^[d] → ℂ :=
    fun z ↦ ⟪timeFreqShift z.re z.im g, f⟫_ℂ
\end{leancode}

Given a polynomial \lean{h}, one can define the window $e^{-\|x\|^2} h(x)$ directly. However, one wants to consider it as an element of $L^2$. The Lean function \lean{MemLp.toLp} allows one to interpret a function as an element of $L^p$, as long as one provides a proof of integrability. For the purpose of explaining the  statement, the  proof of integrability (which does not change the meaning of the statement) is omitted with the placeholder \lean{sorry} in the showcase file, which records only the statements and not the proofs.

\begin{leancode}
def window (h : ℂ[Fin d]) :  L²(ℝ^[d],ℂ):=
    MemLp.toLp 
      (fun x ↦ Real.exp (-‖x‖^2) * (h.eval (x:ℂ^[d]))) 
      (by sorry)
\end{leancode}

\subsection{Source code}

The file \lean{Showcase.lean} contains the definitions and theorem statement
presented above, with the proof of \lean{PolyDiscreteSPR} replaced by the
placeholder \lean{sorry}. This separates the task of reading the formalized
statement from that of inspecting its proof. The header of the file identifies
\lean{DiscretePR.PolyDiscreteSPR_proved} in \lean{Main.lean} as the proved
theorem, with the definitions repeated in \lean{Definitions.lean} inside the
namespace \lean{DiscretePR}. The showcase file itself records the statement
and does not supply its proof. The Lean code is available at the accompanying GitHub repository:
\begin{center}   \url{https://github.com/josefgreilhuber/DiscretePhaseRetrieval}.
\end{center}

\section*{Usage of Large Language Models}

The problem that we study here has had an important influence on the authors. In particular, it was the main objective of the PhD thesis of the third author. 

Discussions between the third and fourth authors on this problem have been ongoing for more than a year and discussions between the second and fourth authors began during Fall 2025, while the second author was visiting ETH Zürich. During that
time, a plausible strategy was developed, but executing the technical details remained unclear.

During March 2026, the first and last authors had success Lean verifying two related research papers \cite{abdalla2026stable,bertolini20262} and using GPT-5.4 as a research partner.
At this point, these two authors met at NYU and tried to get GPT-5.4 to execute the technical details following a proof sketch devised by the authors.
Despite using multiple page prompts and relatively precise instructions, we could not get GPT-5.4 to converge to a meaningful solution following the sketch.
However, discussions between the authors continued, and after roughly two months we were able to execute a proof using GPT-5.5 in the case $d=1$ in the Fock space. It was clear from the proof strategy that the argument would generalize to Hermite windows in all dimensions. This was implemented shortly after,
with the help of GPT-5.5. 

The main insight provided by GPT-5.5 that was not known to the authors was the precise VC dimension bound in \Cref{lem:VCdimension}. Notably, VC dimension
arguments had been used in previous work \cite{abdalla2025sharp} of the fourth author, but the actual execution of  \Cref{lem:VCdimension} was not human-made. After obtaining a first proof, the authors spent a significant amount of time simplifying and refining the argument. This resulted in the more general, shorter, and more precise proof presented here.

The Lean verification in this project was also very interesting. The original proof suggested by GPT-5.5 required the formalization of several deep facts in algebraic geometry. The first edition of the formalization was done by the first author and resulted in 155k lines of code to formalize the $d=1$ case in the Fock space. A new ``cleanup" workflow was then designed by the third and fourth authors which, remarkably, reduced the code to around 22k lines (this was executed with Fable 5). Generalizing the result to Hermite windows then roughly doubled the lines of code.

Finally, the second author carefully analyzed the proof and the Lean code (the latter with extensive use of Fable 5.1), making it substantially shorter and clearer as well as applicable to all windows in $e^{-\pi |x|^2} \mathbb C[x_1,\ldots,x_d]$.

We remark that the manuscript is entirely human written and does not contain LLM generated text.

\section*{Acknowledgments}

L.L.~is grateful to the Azrieli Foundation for the award of an Azrieli Fellowship and acknowledges the support of this research by ISF Grant No.~854/25. J.G. was partially supported by NSF grant DMS-2247185 awarded to Eugenia Malinnikova and by the David and Lucile Packard Foundation through the Packard Science and Engineering Fellowship awarded to Alexandr Logunov. J.G. and J.D. would like to express their gratitude to ETH Z\"urich for hosting them in the fall of 2025 and winter of 2026, respectively.

\printbibliography

\end{document}